\tikzstyle arrowstyle=[scale=1]
\tikzstyle directed=[postaction={decorate,decoration={markings,
    mark=at position .65 with {\arrow[arrowstyle]{stealth}}}}]
\tikzstyle reverse directed=[postaction={decorate,decoration={markings,
    mark=at position .45 with {\arrowreversed[arrowstyle]{stealth};}}}]
\DeclareMathOperator{\Aut}{Aut}
\DeclareMathOperator{\rank}{rank}
\DeclareMathOperator{\Epi}{Epi}
\newcommand{\ra}{\rightarrow}
\newcommand{\Z}{\ensuremath{\mathbb{Z}}}
\newtheorem{thm}{Theorem}[section]
\newtheorem{lemma}[thm]{Lemma}
\newtheorem{prop}[thm]{Proposition}
\newtheorem{ex}[thm]{Example}
\newtheorem{claim}{Claim}
\def\acts{\curvearrowright}
\begin{document}

\bigskip
\begin{center}
\Large
Nielsen equivalence in Gupta-Sidki groups
\normalsize

\bigskip

Aglaia Myropolska\footnote{The author acknowledges the support of the Swiss National Science Foundation, grant 200021\_144323.}

\end{center}

\bigskip

\begin{center}
Abstract \end{center} 
For a group $G$ generated by $k$ elements, \emph{the Nielsen equivalence classes} are defined as orbits of the action of $\Aut F_k$, the automorphism group of the free group of rank $k$, on the set of generating $k$-tuples of $G$. 

Let $p\geq 3$ be prime and $G_p$ the Gupta-Sidki $p$-group. We prove that there are infinitely many Nielsen equivalence classes on generating pairs of $G_p$.

\bigskip

\section{Introduction}

Let $G$ be a finitely generated group. \emph{The rank $\rank(G)$} of a group $G$ is the minimal number of generators of $G$. Fix $k\geq \rank(G)$ and let $\Epi(F_k, G)$ be the set of epimorphisms $\phi: F_k\ra G$ from the free group $F_k$ of rank $k$ to $G$. 

Consider the natural action of the group $\Aut F_k\times\Aut G$ on $\Epi(F_k, G)$: for $(\tau, \sigma)~\in~\Aut F_k\times\Aut G$ and for $\phi\in \Epi(F_k, G)$ we define $$\phi^{(\tau, \sigma)}=\sigma\cdot \phi\cdot \tau^{-1}.$$ 
The orbits of this action are called \emph{$T_k$-systems (systems of transitivity)}. B.H.~Neumann and H.~Neumann, motivated by the study of presentations of finite groups, introduced $T_k$-systems in \cite{MR0040297}. One of the main conjectures in this area, sometimes attributed to Wiegold, is that for every finite simple group there is only one system of transitivity when $k\geq 3$\footnote{The classification of finite simple groups implies that every finite simple group can be generated by $2$ elements.}. It is also not known whether there is only one orbit when the action of $\Aut F_k$ with $k\geq 3$ is only considered. 

It was proved by Nielsen \cite{MR1511927} that $\Aut F_k$ is generated by the following automorphisms, where $\{x_1, \dots, x_k\}$ is the basis of $F_k$: 
\begin{align*}
R_{ij}^{\pm}(x_1,\dots,x_i,\dots, x_j,\dots, x_k)&=(x_1,\dots,x_{i}x_j^{\pm 1},\dots,x_j,\dots, x_k),\\
 I_j(x_1,\dots,x_j,\dots,x_k)&=(x_1,\dots,x_j^{-1},\dots,x_k),\\
\end{align*}
where $1\leq i,j \leq k$, $i\neq j$. The transformations above are called \emph{elementary Nielsen moves}. 

Observe that there is a one-to-one correspondence between $\Epi(F_k, G)$ and the set $\{(g_1, \dots, g_k)\mid \langle g_1, \dots, g_k\rangle =G\}$ of generating $k$-tuples of $G$. The action of $\Aut F_k$ on the generating $k$-tuple $(g_1, \dots, g_k)$ is done by applying sequences of elementary Nielsen moves to $(g_1, \dots, g_k)$ by precomposition. 
For example, if $G=\Z^k$ then the set of generating $k$-tuples of $\Z^k$ coincides with $GL(k, \Z)$ and the elementary Nielsen moves induce elementary row operations on the matrices. It follows that the action of $\Aut F_k$ on $\Epi(F_k, \Z^k)$ is transitive.

The orbits of the action $\Aut F_k\acts Epi(F_k, G)$ are called  \emph{Nielsen (equivalence) classes} on generating $k$-tuples of $G$. In recent years the Nielsen equivalence classes became of particular interest as they appear as connected components of the Product Replacement Graph, whose set of vertices coincides with the set $\Epi(F_k, G)$ and the edges correspond to elementary Nielsen moves (see \cite{MR2405933,MR2807845,MR1829489} and Section \ref{Nielsen equiv} for more on this topic). 

Before studying further the Nielsen equivalence, we point out its relation to the famous Andrews-Curtis conjecture \cite{MR0173241}.
Elementary Nielsen moves together with the transformations
\begin{align*} AC_{i,w}(x_1,...,x_i,...,x_k)=(x_1,...,w^{-1}x_iw,...,x_k)
\end{align*} where $1\leq i\leq k$ and $w\in F_k $, form the set of \emph{elementary Andrews-Curtis moves}. Elementary Andrews-Curtis moves transform \emph{normally generating sets} (sets which generate $F_k$ as a normal subgroup) into normally generating sets. 

\emph{The Andrews-Curtis conjecture} asserts that, for a free group $F_k$ of rank $k\geq 2$ and a free basis $(x_1,...,x_k)$ of $F_k$, any normally generating $k$-tuple $(y_1,...,y_k)$ of $F_k$ can be transformed into $(x_1,...,x_k)$ by a sequence of elementary Andrews-Curtis moves.

We say that two normally generating $k$-tuples of $F_k$ are \emph{Andrews-Curtis equivalent} if one is obtained from the other by a finite chain of elementary Andrews-Curtis moves. 
The Andrews-Curtis equivalence corresponds to the actions of $\operatorname{Aut}F_k$ and of $(F_k)^k$ on normally generating $k$-tuples of $F_k$. 
More generally, for a finitely generated group $G$ and $k\geq \rank(G)$, the above actions can be defined on the set of normally generating $k$-tuples of $G$ by precomposition. 
The orbits of this action are called \emph{the Andrews-Curtis equivalence classes in $G$}.
The analysis of Andrews-Curtis equivalence for arbitrary finitely generated groups has its own importance to analyse potential counter-examples to the conjecture. A possible way to disprove the conjecture would be to find two normally generating $k$-tuples of $F_k$ such that their images in some finitely generated group are not Andrews-Curtis equivalent. 
The Andrews-Curtis equivalence was studied for various classes of groups, for instance, for finite groups in \cite{MR2022117,MR2195451}, for free solvable and free nilpotent groups in \cite{MR744511}, for the class $\mathfrak{C}$ of finitely generated groups of which every maximal subgroup is normal in \cite{Myropolska}.
The class $\mathfrak{C}$ includes finitely generated nilpotent groups; moreover all Grigorchuk groups and GGS groups, \emph{e.g.} Gupta-Sidki $p$-groups, belong to $\mathfrak{C}$ by \cite{MR1841763,MR2197824}.  In \cite{Maxsub_spinal} the result for GGS groups was generalized: the authors proved that all multi-edge spinal torsion groups acting on the regular $p$-ary rooted tree, with $p$ odd prime, belong to $\mathfrak{C}$. 

Observe that, for a group $G$ in $\mathfrak{C}$, a normally generating set of $G$ is, in fact, a generating set. Therefore, for groups in $\mathfrak{C}$ the partition (of the set of generating $k$-tuples) into Nielsen equivalence classes is a refinement of the partition into Andrews-Curtis classes. We further describe what is known about Nielsen equivalence for some groups in the class $\mathfrak{C}$.

The most well-understood classification of Nielsen equivalence classes is known for finitely generated abelian groups (see \cite{MR0040297, MR1684568,MR2817667}). Namely, if $G$ is a finitely generated abelian group then the action of $\Aut F_k$ on $Epi(F_k, G)$ is transitive when $k\geq \rank(G)+1$. Moreover, if $k=\rank(G)$ then the number of Nielsen equivalence classes is finite and depends on the primary decomposition of $G$ (see Theorem \ref{DiaconisGraham} for details). It also follows from the latter papers that for any finitely generated abelian group there is only one $T_k$-system for any $k\geq \rank(G)$. 

For a finitely generated nilpotent group the action of $\Aut F_k$ is transitive on $Epi(F_k, G)$ when $k\geq \rank(G)+1$ \cite{MR1218122}. 
However when $k=\rank(G)$ the unicity of Nielsen equivalence class generally breaks down. For instance, Dunwoody \cite{MR0153745} showed that to every pair of integers $n>1$ and $N>0$ there exists a finite nilpotent group of rank $n$ and nilpotency class $2$ for which there are at least $N$ $T_n$-systems.

As a generalization of finite nilpotent groups, we consider the family of Gupta-Sidki $p$-groups $\{G_p\}_{p\geq 3}$ where $p$ is odd prime.  
The group $G_p$ is a $p$-group of rank $2$ acting on the rooted $p$-ary tree, every quotient of which is finite and, therefore, nilpotent  (Gupta-Sidki $p$-groups were defined in \cite{MR759409}; the reader can find the definition in Section $2$). It was shown by Pervova \cite{MR2197824} that the groups $G_p$ belong to the class $\mathfrak{C}$. 
This property was the main ingredient in \cite{Myropolska} for proving that there is only one Nielsen equivalence class for  $G_p$ for $k\geq 3=\operatorname{rank}(G_p)+1$. Moreover, for a group belonging to the class $\mathfrak{C}$, it is relevant to analyse Nielsen equivalence classes in the quotient $G/\Phi(G)$, where $\Phi(G)$ is the Frattini subgroup of $G$. 
Namely, if there are two generating $k$-tuples of $G/\Phi(G)$ which \emph{are not} Nielsen equivalent, then their preimages in $G$ are generating $k$-tuples of $G$ which also \emph{are not} Nielsen equivalent
(see the section on the class $\mathfrak{C}$ in \cite{Myropolska} for details).  
Using this argument and also the fact that for $p>3$ there are, by Theorem \ref{DiaconisGraham}, $\frac{p-1}{2}$ Nielsen classes on generating pairs of the quotient $G_p/\Phi(G_p)=(\mathbb{Z}/p\mathbb{Z})^2$, we conclude that there are at least $\frac{p-1}{2}$ Nielsen classes in $G_p$ for $p>3$. 

For $p=3$ the question on the transitivity of the action of $\Aut F_2$ on $\Epi(F_2, G_3)$ is more subtle.
In this paper we show in particular that, although there is only one Nielsen class on generating pairs of $G_3/\Phi(G_3)=(\mathbb{Z}/3\mathbb{Z})^2$, the action of $\operatorname{Aut}F_2$ is not transitive on $\operatorname{Epi}(F_2, G_3)$. A natural question then is  how many orbits this action has. 

There are numerous examples of groups $G$ with infinitely many Nielsen classes when $k=\rank(G)$. These groups can be found among fundamental groups of certain knots (\cite{MR0444782, MR2780752}), one-relator groups (\cite{MR0419617}), relatively free polynilpotent groups (see \cite{MyrNa} and references therein) and many others. 
We show that for the Gupta-Sidki $p$-group, with $p\geq 3$ prime, there are infinitely many Nielsen classes when $k=\rank(G_p)=~2$. To the author's knowledge this is the first known examples of torsion groups that have this property.

\begin{thm} Let $p\geq 3$ be prime and $G_p$ the Gupta-Sidki $p$-group. Then there are infinitely many Nielsen equivalence classes on generating pairs of $G_p$.
\label{main}
\end{thm}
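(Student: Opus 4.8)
The plan is to exhibit a single $\Aut F_2$-invariant of generating pairs that already separates infinitely many of them, namely the conjugacy class of the commutator. Recall the classical fact that for the free group $F_2=\langle x_1,x_2\rangle$ every $\tau\in\Aut F_2$ sends $[x_1,x_2]$ to a conjugate of $[x_1,x_2]^{\pm 1}$, the sign being the determinant of the induced map on $F_2^{\mathrm{ab}}=\Z^2$. Consequently, if $(g_1,g_2)$ is a generating pair of $G_p$ coming from $\phi\in\Epi(F_2,G_p)$, then for any $\tau\in\Aut F_2$ the commutator of the pair $\phi\circ\tau$ equals $\phi(\tau[x_1,x_2])$, which is conjugate in $G_p$ to $[g_1,g_2]^{\pm 1}$. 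Thus the unordered pair of conjugacy classes $\{[g_1,g_2]^{G_p},([g_1,g_2]^{-1})^{G_p}\}$ is a Nielsen invariant, and it suffices to produce generating pairs of $G_p$ whose commutators fall into infinitely many conjugacy classes.

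First I would produce a large reservoir of generating pairs close to $(a,b)$. A direct computation gives the identity $[a,bt]=[a,b]\cdot b[a,t]b^{-1}$ for every $t\in G_p$, and since the level stabilizers $\operatorname{St}_{G_p}(n)$ are normal, $t\in\operatorname{St}_{G_p}(n)$ forces $z_t:=b[a,t]b^{-1}\in\operatorname{St}_{G_p}(n)$. Because $\Phi(G_p)$ has finite index, the congruence subgroup property of $G_p$ yields an $n_0$ with $\operatorname{St}_{G_p}(n_0)\subseteq\Phi(G_p)$, and for $t\in\operatorname{St}_{G_p}(n_0)$ the images of $a$ and $bt$ in $G_p/\Phi(G_p)=(\Z/p\Z)^2$ coincide with those of $a,b$; as the latter generate the Frattini quotient and $\Phi(G_p)$ consists of non-generators, the pair $(a,bt)$ generates $G_p$. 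Hence each $t\in\operatorname{St}_{G_p}(n_0)$ yields a generating pair whose commutator is $[a,b]\cdot z_t$ with $z_t$ lying arbitrarily deep in the tree.

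It then remains to choose the parameters $t$ so that the elements $[a,b]\cdot z_t$ are pairwise non-conjugate in $G_p$. Here I would argue through the finite quotients $G_p/\operatorname{St}_{G_p}(k)$: conjugacy in $G_p$ implies conjugacy in every such quotient, so it is enough to find, for each large $k$, a $t$ with $z_t\in\operatorname{St}_{G_p}(k)\setminus\operatorname{St}_{G_p}(k+1)$ for which $[a,b]\cdot z_t$ is not conjugate to $[a,b]$ modulo $\operatorname{St}_{G_p}(k+1)$; letting $k\to\infty$ then forces infinitely many commutator conjugacy classes, and hence infinitely many Nielsen classes. The existence of such $t$ I would establish by a counting argument inside the finite $p$-group $G_p/\operatorname{St}_{G_p}(k+1)$: the elementary abelian section $\operatorname{St}_{G_p}(k)/\operatorname{St}_{G_p}(k+1)$ grows with $k$, whereas the centralizer of the image of $[a,b]$ acts on the relevant cosets with orbits of bounded relative size, so for $k$ large a non-conjugate choice must survive, while the linear map $t\mapsto z_t=b[a,t]b^{-1}$ has image large enough in these sections to realize it.

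The main obstacle is precisely this last step: converting the deep perturbation $z_t$ into a genuine statement about conjugacy classes in $G_p$. Two elements that agree modulo $\operatorname{St}_{G_p}(k)$ can still be conjugate, so one must understand conjugation at level $k$ well enough --- via the self-similar section maps $\psi$ and the structure of the finite quotients --- to guarantee that the bounded centralizer of $[a,b]$ cannot absorb all the deep perturbations, and that the twisted-commutator map $t\mapsto[a,t]$ reaches enough of the section $\operatorname{St}_{G_p}(k)/\operatorname{St}_{G_p}(k+1)$. Controlling simultaneously that (i) $(a,bt)$ generates, (ii) $z_t$ has the prescribed depth, and (iii) the resulting commutators are truly non-conjugate is where the self-similar structure of $G_p$ and its congruence subgroup property must enter in an essential and quantitative way.
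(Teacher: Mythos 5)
Your overall skeleton matches the paper's: you use Nielsen's observation that the conjugacy class of $[g_1,g_2]^{\pm 1}$ is an $\Aut F_2$-invariant (the paper's Lemma \ref{criterion}), you build generating pairs by multiplying the second generator by elements lying deep in the tree and inside $\Phi(G_p)$ (the paper takes the pairs $(x,yz_n)$ with $z_1=[x,y]$, $z_n=(1,\dots,1,z_{n-1})$, and invokes the same Frattini lemma, Lemma \ref{frat}), and your depth-separation reduction is correct and cleanly organized: if for each $k$ the perturbed commutator $[a,b]z_{t_k}$, with $z_{t_k}\in St_{G_p}(k)\setminus St_{G_p}(k+1)$, is not conjugate to $[a,b]^{\pm 1}$ modulo $St_{G_p}(k+1)$, then these commutators are pairwise non-conjugate in $G_p$. (Minor point: you state the requirement only for $[a,b]$, but to handle inversion in the Nielsen invariant you need non-conjugacy to $[a,b]^{-1}$ modulo $St_{G_p}(k+1)$ as well.)

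The genuine gap is the existence step, and it is exactly the mathematical heart of the theorem. You propose to find the needed $t$ by a counting argument: the section $St_{G_p}(k)/St_{G_p}(k+1)$ grows, the centralizer of the image of $[a,b]$ acts on the relevant cosets ``with orbits of bounded relative size'', and the map $t\mapsto b[a,t]b^{-1}$ has image ``large enough'' in these sections. Neither of the quoted claims is proved, and neither is obvious: the number of conjugates of $[a,b]$ in $G_p/St_{G_p}(k+1)$ that lie in the coset of $[a,b]$ modulo $St_{G_p}(k)$ is governed by the index of successive centralizer preimages, which can grow with $k$ just as the section does, and the image of the (non-homomorphic) map $t\mapsto[a,t]$ on the admissible set of $t$'s is not known to fill any definite proportion of the section. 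You acknowledge this yourself in your final paragraph; as written, the argument defers precisely the step that constitutes the theorem. The paper resolves this point not by counting but by explicit self-similar computation (Propositions \ref{conj} and \ref{conj_p}): it computes $[x,yz_n]=(z_{n-1}^{-1}y^{-1}x,x,\dots,xyz_{n-1})$, analyzes the conjugation equation coordinate by coordinate to force the conjugator to stabilize the first level, reduces non-conjugacy of $[x,yz_k]$ and $[x,yz_j]^{\pm 1}$ inductively to non-conjugacy of $yz_{k-1}$ and $yz_{j-1}$, and descends to base cases checked in the finite quotients $G_3/St_{G_3}(3)$ and $G_3/St_{G_3}(4)$ via cycle types in $Sym(81)$. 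Without either carrying out such an induction or substantiating your two quantitative claims about centralizers and the image of $t\mapsto[a,t]$, your proposal does not yet prove the theorem.
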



The Gupta-Sidki $p$-group being a subgroup of $\Aut T_p$, the group of automorphisms of the regular $p$-ary rooted tree $T_p$, has natural quotients by $St_G(n)$, the level stabilizer subgroups.
These quotients are finite nilpotent $2$-generated groups with growing nilpotency class. The latter is true since the limit of these quotients in the space of marked $2$-generated groups is the Gupta-Sidki $p$-group itself, which is not finitely presentable \cite{MR904179}.
In the last part of the paper we show that for each $n\geq 1$ the quotient group $G^{(n)}=G_3/St_{G_3}(n+3)$ of the Gupta-Sidki $3$-group has the property that the action $\Aut F_2\acts \Epi(F_2, G^{(n)})$ is not transitive.
Note that there is only one Nielsen equivalence class in $(\Z/3\Z)^2$, the abelianization of each $G^{(n)}$. 
It would be interesting to realize whether the number of Nielsen classes grows with $n$ but this for the moment remains an open question. 
An affirmative answer on this question, in particular, would imply that there were infinitely many Nielsen equivalence classes in $G_3$. Notice, however, that the proof of Theorem \ref{main} does not rely on Proposition \ref{cor}.

\begin{prop} Let $G_3$ be the Gupta-Sidki $3$-group and $St_{G_3}(n)$ the level stabilizer subgroups of $G_3$. Set $G^{(n)}=G_3/St_{G_3}(n+3)$.  Then the action $\Aut F_2\acts \Epi(F_2, G^{(n)})$ is not transitive for any $n\geq 1$.
\label{cor}
\end{prop}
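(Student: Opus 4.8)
The plan is to exploit the classical fact that Nielsen equivalence preserves the commutator of a generating pair up to conjugacy and inversion, and to combine this with the branch structure of $G_3$ to distinguish two generating pairs in each finite quotient. First I would record the invariant: for the free basis $(x_1,x_2)$ of $F_2$, each elementary Nielsen move sends $[x_1,x_2]$ to a conjugate of $[x_1,x_2]^{\pm 1}$ (a direct check on the generators $R_{ij}^{\pm}$ and $I_j$: e.g. $R_{12}^{+}$ gives $[x_1x_2,x_2]=[x_1,x_2]^{x_2}$ and $I_1$ gives $[x_1^{-1},x_2]=([x_1,x_2]^{-1})^{x_1^{-1}}$). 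Hence for any group $Q$ and any two Nielsen-equivalent generating pairs $(g_1,g_2)$, $(h_1,h_2)$ of $Q$, the element $[g_1,g_2]$ is conjugate in $Q$ to $[h_1,h_2]^{\pm 1}$. Thus the conjugacy class of $[g_1,g_2]^{\pm1}$ is a Nielsen invariant, and to prove non-transitivity of $\Aut F_2\acts\Epi(F_2,G^{(n)})$ it suffices to produce two generating pairs of $G^{(n)}$ whose commutators are not conjugate up to inversion.

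Next I would reduce all the quotients to the base level $n=1$. The maps $G^{(n)}=G_3/St_{G_3}(n+3)\ra G^{(1)}=G_3/St_{G_3}(4)$ are surjections that intertwine the elementary Nielsen moves, so if $(g_1,g_2),(h_1,h_2)$ are generating pairs of $G^{(n)}$ whose images in $G^{(1)}$ are not Nielsen equivalent, then $(g_1,g_2),(h_1,h_2)$ are not Nielsen equivalent either. Moreover the kernel $St_{G_3}(4)/St_{G_3}(n+3)$ lies in the Frattini subgroup of the finite $3$-group $G^{(n)}$: indeed $\Phi(G_3)=G_3'$ since $G_3^{\mathrm{ab}}=(\Z/3\Z)^2$ is elementary abelian, and one checks (via the wreath recursion) that $St_{G_3}(2)\subseteq G_3'$, so $St_{G_3}(4)\subseteq \Phi(G_3)$. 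Consequently every generating pair of $G^{(1)}$ lifts to a generating pair of $G^{(n)}$, and inequivalent pairs downstairs lift to inequivalent pairs upstairs. This reduces the Proposition to exhibiting two generating pairs of $G^{(1)}$ with non-conjugate commutators.

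For the two pairs I would take the standard pair $(a,b)$ and a twisted pair $(a,bw)$ with $w\in G_3'=\Phi(G_3)$; since $w\in\Phi(G_3)$, the pair $(a,bw)$ again generates. From $[a,bw]=[a,w]\,[a,b]^{w}$ and $w\in\gamma_2(G_3)$ one obtains $[a,bw]\equiv [a,b]\,[a,w]\pmod{\gamma_4(G_3)}$, where $[a,w]\in\gamma_3(G_3)$. The inversion case is handled at once: $\overline{[a,b]}\neq 0$ in $\gamma_2/\gamma_3$ (it is the image of the single generating commutator), and conjugation is trivial on the central section $\gamma_2/\gamma_3$, so $[a,bw]$ and $[a,b]^{-1}$ already have images $\overline{[a,b]}$ and $-\overline{[a,b]}$ there, which differ in a $3$-group. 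For the conjugacy case, the conjugates of $[a,b]$ differ from it by elements of $\bigl[[a,b],G_3\bigr]\subseteq\gamma_3$, whose images modulo $\gamma_4$ form a subgroup $D\leq\gamma_3/\gamma_4$; it then suffices to choose $w\in G_3'$ so that the class of $[a,w]$ in $\gamma_3/\gamma_4$ lies outside $D$, and so that $[a,w]\notin St_{G_3}(4)$, making the discrepancy both genuine and visible in $G^{(1)}$.

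The main obstacle is precisely this last verification: computing the section $\gamma_3/\gamma_4$ of $G_3$ and the conjugation subgroup $D$, exhibiting an explicit $w$, and confirming that the distinguishing element survives modulo $St_{G_3}(4)$. This is where the shift to level $4$ (that is, the $n+3$ in the statement) enters, since the nilpotency-class-$3$ data must still be detectable in the finite quotient. I expect this computation — carried out through the recursion $a=(e,e,e)\sigma$, $b=(a,a^{-1},b)$, by tracking the sections of $[a,b]$ and $[a,w]$ modulo the relevant stabilizer — to be the genuine technical burden. Once it is in place, the invariant of the first paragraph together with the reduction of the second paragraph yields that $\Aut F_2\acts\Epi(F_2,G^{(n)})$ is not transitive for every $n\geq 1$.
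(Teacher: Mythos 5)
Your overall framework matches the paper's: the commutator-conjugacy invariant (the paper's Lemma \ref{criterion}), the reduction of all $G^{(n)}$ to the single quotient $G_3/St_{G_3}(4)$, and the lifting of generating pairs (the paper invokes Gasch\"utz's lemma where you argue that the kernel lies in the Frattini subgroup; both are valid). But the key step --- producing two generating pairs of $G_3/St_{G_3}(4)$ whose commutators are not conjugate up to inversion --- is where your proposal fails, and it fails provably, not merely because the computation is left undone. For any $w\in G_3'=\gamma_2$, write $w\equiv[a,b]^k\pmod{\gamma_3}$ (possible since $\gamma_2/\gamma_3$ is generated by the image of $[a,b]$ in any $2$-generated group). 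Since $[a,\gamma_3]\subseteq\gamma_4$, you get $[a,w]\equiv[a,[a,b]^k]\equiv[[a,b],a]^{-k}\pmod{\gamma_4}$. Hence the class of $[a,w]$ in $\gamma_3/\gamma_4$ always lies in the cyclic subgroup generated by the class of $[[a,b],a]$, which is contained in your subgroup $D$ (as $D$ is generated by the classes of $[[a,b],a]$ and $[[a,b],b]$). So no choice of $w\in G_3'$ can put $[a,w]$ outside $D$. Worse still, $[a,bw]\equiv[a,b]\,[[a,b],a]^{-k}\equiv[a,b]^{a^{-k}}\pmod{\gamma_4}$: the two commutators \emph{are} conjugate modulo $\gamma_4$, so any invariant of nilpotency class at most $3$ is structurally blind to the difference you want to detect. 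The same computation in the free class-$3$ group on two generators shows this obstruction is not special to $G_3$.

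To repair the argument you would have to descend further in the lower central series (where controlling which elements of $\gamma_3$ arise as $[[a,b],g]$ becomes delicate), or do what the paper does: bypass the nilpotent filtration entirely. The paper takes the pairs $(x,y)$ and $([x,y]x,\,y)$ (a Frattini twist of the \emph{first} generator by the commutator), realizes $G_3/St_{G_3}(4)$ concretely as a subgroup of $Sym(81)$, computes the two commutators as explicit permutations (via GAP), and observes that their cycle types differ, so they are not conjugate up to inversion even in $Sym(81)$. Your reduction steps can be kept essentially verbatim; only the distinguishing argument must be replaced by a computation of this kind that sees past class $3$.
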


\textbf{Acknowledgement.} The author would like to thank Pierre de la Harpe, Tatiana Nagnibeda and Said Sidki for stimulating discussions on this work, and Laurent Bartholdi for valuable suggestions during the conference ``Growth in Groups'' in Le Louverain.

\section{Preliminaries on groups acting on rooted trees}
Let $X=\{1,2,\dots, d\}$ with $d\geq 2$ be a finite set. 
The vertex set of the rooted tree $T_d$ is the set of finite sequences $\{x_1 x_2\dots x_k : x_i \in X\}$ over $X$; two sequences are connected by an edge when one can be obtained from the other by right-adjunction of a letter in $X$. 
The top node (the root) is the empty sequence $\emptyset$, and the children of $\sigma$ are all the $\sigma s$ for $s\in X$. 
A map $f\colon T_d\ra T_d$ is an \emph{automorphism of the tree} $T_d$ if it is bijective and it preserves the root and adjacency of the vertices. 
An example of an automorphism of $T_d$ is the rooted automorphism $a_{\pi}$, defined as follows: for the permutation $\pi\in Sym(d)$, set $a_{\pi}(s\sigma):=\pi(s) \sigma$. Geometrically it can be viewed as the permutation of $d$ subtrees just below the root $\emptyset$. Denote by $\Aut T_d$ \emph{the group of automorphisms of the tree $T_d$}.


Let $G\leq \Aut T_d$. Denote by $St_G(\sigma)$ the subgroup of $G$ consisting of the automorphisms that fix the sequence $\sigma$, i.e. $$St_G(\sigma)=\{g\in G\mid g(\sigma)=\sigma\}.$$ And denote by $St_G(n)$ the subgroup of $G$ consisting of the automorphisms that fix all sequences of length $n$, i.e. $$St_G(n)=\cap_{\sigma \in X^n} St_G(\sigma).$$

Notice  an obvious inclusion $St_G(n+1)\leq St_G(n)$. 
Moreover, observe that for any $n\geq 0$ the subgroups $St_G(n)$ are normal and of finite index in $G$. 
We therefore have a natural epimorphism between finite groups \begin{equation} G/St_G(n+1)\rightarrow G/St_G(n),\end{equation} for any $n\geq 0$ .



The examples of groups acting on rooted trees include groups of intermediate growth, such as the Grigorchuk group \cite{MR565099} and the Gupta-Sidki $p$-groups \cite{MR696534}. We define the latter family of groups below.

Fix $p\geq 3$ prime and $X=\{1,2,\dots, p\}$. 
Let $\pi=(1,2,\dots, p)$ be the cyclic permutation on $X$. Let $s$ belong to $X$ and $\sigma$ belong to $T_d$. Denote by $x$ the rooted automorphism of $T_p$ defined by  $$x(s\sigma)=\pi(s)\sigma.$$
Denote by $y$ the automorphism of $T_p$ defined by
\begin{equation*}
y(s\sigma)=\begin{cases}
s x(\sigma) &\mbox{if } s = 1 \\ 
s x^{-1}(\sigma) & \mbox{if } s=2\\
s y(\sigma) & \mbox{if } s=p\\
s \sigma & \mbox{otherwise}. 
\end{cases}
\end{equation*}

The \emph{Gupta-Sidki} $p$-group is the group $G_p$ of automorphisms of the tree $T_p$ generated by $x$ and $y$ and we will write $$G_p=\langle x, y\rangle.$$

To shorten the notation for the element $y$ we will simply write $$y=(x, x^{-1}, 1, \dots, 1, y).$$ 
More generally, for any element $g\in G_p$ we can write $g=x^{i}(g_1, \dots, g_p)$ for some $0\leq i\leq p-1$ and $g_1, \dots, g_p\in G_p$.

\medskip
We summarize here \emph{some facts on the Gupta-Sidki $p$-group} which will be used in the following section.
\begin{enumerate}
\item\cite{MR759409} $G_p$ is just-infinite, i.e. every proper quotient of $G_p$ is finite.
\item\cite{MR2197824} All maximal subgroups of $G_p$ are normal.
\item\cite{MR1953179} The abelianization $G_p^{ab}=G_p/[G_p,G_p]$ is isomorphic to $(\mathbb{Z}/p\mathbb{Z})^2$.
\end{enumerate}

\section{Nielsen equivalence in Gupta-Sidki $p$-groups}
For a finitely generated group $G$ and $k\geq \operatorname{rank}(G)$, we define the \emph{Nielsen graph}\footnote[3]{Also called the Extended Product Replacement Graph.}  $N_k(G)$ as follows: \begin{itemize} 
\item[-] the set of vertices consists of generating $k$-tuples, i.e. \begin{equation*}V_{N_k}(G)=\{(g_1,...,g_k)\in G^k\mid \langle g_1,...,g_k \rangle = G\};\end{equation*}
\item[-]  two vertices are connected by an edge if one of them is obtained from the other by an elementary Nielsen move.
\end{itemize}
Observe that the graph $N_k(G)$ is connected if and only if the action of $\operatorname{Aut}F_k$ on $\operatorname{Epi}(F_k,G)$ is transitive.

\label{Nielsen equiv}

\smallskip
Recall, that for a finitely generated group $G$ the \emph{Frattini subgroup} $\Phi(G)=\cap_{M<_{max}G} M$ is defined as the intersection of all maximal subgroups of $G$. Equivalently, the Frattini subgroup of $G$ contains all the \emph{non-generators}, i.e. the elements which can be removed from any generating set. The latter implies the following lemma.

\begin{lemma}[\cite{MR1218122}]
Let $G$ be a group generated by $\{x_1,...,x_k\}$ and let $\varphi_1,...,\varphi_k \in \Phi(G)$. Then $\langle x_1\varphi_1,...,x_k\varphi_k\rangle=G$.
\label{frat}
\end{lemma}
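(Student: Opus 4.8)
The plan is to argue by contradiction, exploiting the description of $\Phi(G)$ as the intersection of all maximal subgroups of $G$. Set $H=\langle x_1\varphi_1,\dots,x_k\varphi_k\rangle$ and suppose, for contradiction, that $H\neq G$. The strategy is to produce a single maximal subgroup that must contain all of the original generators $x_i$, and thereby all of $G$, which is absurd.

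First I would record that, since $G=\langle x_1,\dots,x_k\rangle$ is finitely generated, every proper subgroup of $G$ is contained in a maximal subgroup. This follows from Zorn's lemma applied to the poset of proper subgroups containing $H$: the union of an ascending chain of such subgroups is again a subgroup, and it is \emph{proper}, because if it equalled $G$ it would contain the finite generating set $\{x_1,\dots,x_k\}$, forcing one member of the chain already to be all of $G$. Hence there exists a maximal subgroup $M$ with $H\leq M<G$. Next, by the very definition $\Phi(G)=\cap_{M'<_{max}G}\,M'\leq M$, since $M$ is one of the maximal subgroups over which the intersection is taken; in particular $\varphi_i\in M$ for every $i$. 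As we also have $x_i\varphi_i\in H\leq M$, the identity $x_i=(x_i\varphi_i)\varphi_i^{-1}$ shows that $x_i\in M$ for each $i$. Therefore $G=\langle x_1,\dots,x_k\rangle\leq M$, contradicting $M<G$, and we conclude $H=G$.

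The argument is short, and the only delicate point is the existence of a maximal subgroup $M$ containing the proper subgroup $H$: this is precisely where finite generation of $G$ enters, and it is the sole obstacle to stating the lemma for arbitrary groups. I expect this existence step to be the main thing worth spelling out, while the remaining manipulation $x_i=(x_i\varphi_i)\varphi_i^{-1}$ is routine. As an alternative phrasing one could invoke the non-generator characterisation recalled before the statement: from $x_i=(x_i\varphi_i)\varphi_i^{-1}$ one sees $\langle x_1\varphi_1,\dots,x_k\varphi_k,\varphi_1,\dots,\varphi_k\rangle=G$, and deleting the non-generators $\varphi_1,\dots,\varphi_k\in\Phi(G)$ one at a time yields $H=G$; this route, however, ultimately rests on the same maximal-subgroup fact, so I would present the direct argument above.
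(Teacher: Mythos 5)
Your proof is correct. The paper itself gives no argument at all: it cites \cite{MR1218122} and merely remarks that the lemma follows from the characterisation of $\Phi(G)$ as the set of non-generators --- which is precisely the ``alternative phrasing'' you sketch at the end (adjoin the $\varphi_i$ to see that $\langle x_1\varphi_1,\dots,x_k\varphi_k,\varphi_1,\dots,\varphi_k\rangle=G$, then delete the non-generators one at a time). Your primary argument --- contradiction via a maximal subgroup $M\supseteq H$ produced by Zorn's lemma, with $\Phi(G)\leq M$ forcing each $x_i=(x_i\varphi_i)\varphi_i^{-1}\in M$ --- is the standard proof underlying that non-generator characterisation, so the two routes are really the same mathematics; yours has the merit of being self-contained and of isolating exactly where finite generation is needed (properness of the union of a chain of proper subgroups), a point the paper's one-line justification leaves invisible.
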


As it was explained in the Introduction, for groups in class $\mathfrak{C}$ (the class of finitely generated groups all maximal subgroups of which are normal), the number of connected components of $N_k(G)$ is bounded below by the number of connected components of $N_k(G/\Phi(G))$. 
Since all maximal subgroups of $G_p$, the Gupta-Sidki $p$-group, are normal, it follows that the quotient $G_p/\Phi(G_p)$ is abelian. Moreover, any generating set of the quotient $G_p/\Phi(G_p)$ can be lifted up to the generating set of $G_p$ \cite{Myropolska}.
Therefore $G_p/\Phi(G_p)$ is a quotient of $(\mathbb{Z}/p\mathbb{Z})^2$ of rank $2$; we deduce that $G_p/\Phi(G_p)\cong (\mathbb{Z}/p\mathbb{Z})^2$. Using the following theorem, we find the number of connected components of the Nielsen graph $N_2((\mathbb{Z}/p\mathbb{Z})^2)$.

\begin{thm}[\cite{MR0040297, MR1684568,MR2817667}] \leavevmode
\\Let $A$ be a finitely generated abelian group with the primary decomposition $A\cong \Z^s\times\mathbb{Z}_{m_1}\times ... \times \mathbb{Z}_{m_r}$ with $r, s\geq 0$ and $m_r|m_{r-1}|...|m_1$. Then $\rank(A)=r+s$ and
\begin{enumerate}
\item $N_k(A)$ is connected if $k>r+s$.
\item if $r=0$, i.e. $A\cong \mathbb{Z}^s$, then $N_s(G)$ is connected;
\item otherwise if $m_r=2$ or $m_r=3$ then $N_{r+s}(A)$ is connected and if $m_r> 3$ then $N_{r+s}(A)$ has $\varphi(m_r)/2$ connected components,
\end{enumerate}
where $\varphi$ is the Euler function (the number of positive integers less than $m_r$ which are coprime with $m_r$).
\label{DiaconisGraham}
\end{thm}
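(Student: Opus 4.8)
The plan is to translate the problem into linear algebra over $\Z$ and then isolate a determinant invariant. Because $A$ is abelian, a generating $k$-tuple $(g_1,\dots,g_k)$ is the same datum as an epimorphism $\phi\colon \Z^k\ra A$, and the action of $\Aut F_k$ by elementary Nielsen moves factors through the abelianization $\Aut F_k\ra GL(k,\Z)=\Aut(\Z^k)$, acting by precomposition $\phi\mapsto \phi\circ\tau^{-1}$. Since the images of $R_{ij}^{\pm}$ and $I_j$ are the elementary transvections and sign changes, which together generate $GL(k,\Z)$, the connected components of $N_k(A)$ are exactly the orbits of $GL(k,\Z)$ acting on $\Epi(\Z^k,A)$. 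The equality $\rank(A)=r+s$ is immediate from the primary decomposition.

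Parts (1) and (2) I would dispatch first. For (2), with $A\cong\Z^s$ and $k=s$, an epimorphism $\Z^s\ra\Z^s$ is an isomorphism, so $\Epi(\Z^s,\Z^s)=GL(s,\Z)$ and $GL(s,\Z)$ acts transitively on itself by precomposition, giving one orbit. For (1), with $k>r+s$, I would run a stable-range argument: extending the standard surjection $\pi\colon\Z^{r+s}\ra A$ by zero to $\pi'\colon\Z^k\ra A$, I would show every $\phi$ is $GL(k,\Z)$-equivalent to $\pi'$ by using the redundant coordinate to clear entries and to absorb the sign that obstructs the equal-rank case, so that no nontrivial invariant survives.

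The heart is part (3), $k=r+s$ with $r\ge 1$, and here I would begin by defining the invariant. Since $m_r\mid m_i$ for every $i$, reduction modulo $m_r$ gives $A/m_rA\cong(\Z/m_r\Z)^{r+s}=(\Z/m_r\Z)^k$, so each $\phi\in\Epi(\Z^k,A)$ descends to an isomorphism $\bar\phi\colon(\Z/m_r\Z)^k\ra A/m_rA\cong(\Z/m_r\Z)^k$, and I set $d(\phi)=\det\bar\phi\in(\Z/m_r\Z)^\times$. Under $\tau\in GL(k,\Z)$ one has $d(\phi\circ\tau^{-1})=d(\phi)\cdot\det(\bar\tau)^{-1}$ with $\det\tau=\pm1$, so the class of $d(\phi)$ in $(\Z/m_r\Z)^\times/\{\pm1\}$ is a Nielsen invariant; I claim it is complete. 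Realizability of every value is easy: twisting $\pi$ by a unit $u$ on the last cyclic factor produces $\phi$ with $d(\phi)=u$. For $m_r\in\{2,3\}$ the quotient $(\Z/m_r\Z)^\times/\{\pm1\}$ is trivial (for $m_r=2$ the group is already trivial, for $m_r=3$ because $-1$ generates it), forcing connectivity, whereas for $m_r>3$ there are $\varphi(m_r)/2$ values, since $-1\ne1$ makes $\{\pm1\}$ a subgroup of order $2$.

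The main obstacle — the step I expect to absorb most of the work — is completeness, namely that two generating tuples with equal invariant are Nielsen equivalent. My plan is: (i) by Smith normal form all subgroups $K\le\Z^k$ with $\Z^k/K\cong A$ lie in a single $GL(k,\Z)$-orbit, so after moving $\psi$ I may assume $\ker\phi=\ker\psi=K_0$ for a fixed standard $K_0$; (ii) then $\psi=\gamma\circ\phi$ for a unique $\gamma\in\Aut(A)$, and $\phi,\psi$ are equivalent precisely when $\gamma$ lies in the image $\Gamma$ of $\operatorname{Stab}_{GL(k,\Z)}(K_0)\ra\Aut(A)$; (iii) using that $GL(k,\Z)\ra GL(k,\Z/m_r\Z)$ has image exactly the matrices of determinant $\pm1$ (which follows from surjectivity of $SL(k,\Z)\ra SL(k,\Z/m_r\Z)$ together with one sign change), I would compute that the determinant induces an isomorphism $\Aut(A)/\Gamma\cong(\Z/m_r\Z)^\times/\{\pm1\}$, identifying the orbits with the invariant and finishing the count. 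The delicate points are the exact determination of $\Gamma$ and the reduction of general $A$ to the rank-two model $\Z/m_1\Z\times\Z/m_2\Z$, where completeness is most transparent, with the general case following by peeling off cyclic factors.
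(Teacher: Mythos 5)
You cannot be compared against the paper here, because the paper contains no proof of this statement: Theorem \ref{DiaconisGraham} is quoted from Neumann--Neumann, Diaconis--Graham and Oancea, and only its conclusions are used (the count $\varphi(m_r)/2$ applied to $G_p/\Phi(G_p)\cong(\Z/p\Z)^2$, and connectivity in the case $m_r=3$). Judged against the cited literature, your route is essentially the classical one. The reduction to $GL(k,\Z)$ acting on $\Epi(\Z^k,A)$ is valid, since $\Epi(F_k,A)=\Epi(\Z^k,A)$ for abelian $A$ and the Nielsen moves map onto a generating set of $GL(k,\Z)$; your treatment of part (2) is complete; part (1) is only sketched, but the stable-range idea (use the redundant coordinate to clear entries and absorb the sign) is exactly how the equal-rank obstruction dies for $k>\rank(A)$. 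Your invariant $d(\phi)\in(\Z/m_r\Z)^\times/\{\pm1\}$ is precisely Diaconis--Graham's determinant invariant, and your proofs of invariance and of realizability of every value are correct. One small caveat: $d(\phi)$ depends on the chosen identification $A/m_rA\cong(\Z/m_r\Z)^k$, which rescales all values by a single fixed unit --- harmless for counting orbits, but it should be said.

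The one genuine gap, if your text is read as a proof rather than a plan, is step (iii): the inclusion $\ker\bigl(\Aut(A)\to(\Z/m_r\Z)^\times/\{\pm1\}\bigr)\subseteq\Gamma$, i.e.\ that every $\gamma\in\Aut(A)$ whose mod-$m_r$ determinant is $\pm1$ lifts to an element of $GL(k,\Z)$ stabilizing $K_0$, is asserted but not proved, and it is exactly where all the work in the cited papers lives (your steps (i), via Smith normal form, and (ii) are correct and standard). Be aware that the proposed ``peeling off cyclic factors'' is not routine: an automorphism of $A$ is not block-diagonal with respect to the decomposition $\Z^s\times\Z_{m_1}\times\cdots\times\Z_{m_r}$, since there are nontrivial homomorphisms between the factors, so before reducing to the rank-two model $\Z_{m_1}\times\Z_{m_2}$ you must first normalize $\gamma$ by elementary automorphisms (which do lift, as they come from transvections in $GL(k,\Z)$ preserving $K_0$ thanks to the divisibility $m_r\mid\cdots\mid m_1$); with that normalization the induction closes, and the surjectivity of $SL(k,\Z)\to SL(k,\Z/m_r\Z)$ is used exactly as you indicate. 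So: right invariant, right skeleton, matching the cited proofs, with the completeness lemma still owed.
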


It follows from Theorem \ref{DiaconisGraham} and the arguments before that for $p>3$ the Nielsen graph $N_2(G_p)$ has at least $\frac{p-1}{2}$ connected components. 

To prove Theorem \ref{main} we use an observation by Nielsen (sometimes also attributed to Higman, see lemma \ref{criterion}) as well as an analysis on conjugacy classes in the Gupta-Sidki $p$-group.

\begin{lemma}[Nielsen]
Let $(u,v)$ and $(u', v')$ be two Nielsen equivalent generating pairs of a group $G$. Then the commutator $[u,v]$ is conjugate either to $[u', v']$ or to $[u',v']^{-1}$. 
\label{criterion}
\end{lemma}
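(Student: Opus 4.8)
The plan is to reduce the statement to a single elementary Nielsen move and then propagate the conclusion along a chain of moves. Since $(u,v)$ and $(u',v')$ are Nielsen equivalent, by definition there is a finite sequence of generating pairs $(u,v)=(w_0,z_0),(w_1,z_1),\dots,(w_n,z_n)=(u',v')$ in which each pair is obtained from the previous one by a single elementary Nielsen move. For $k=2$ these moves are, up to relabelling of the two coordinates, the four multiplicative (\emph{transvection}) moves $(w,z)\mapsto(wz^{\pm1},z)$ and $(w,z)\mapsto(w,zw^{\pm1})$, together with the two inversions $(w,z)\mapsto(w^{-1},z)$ and $(w,z)\mapsto(w,z^{-1})$. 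So it suffices to understand how the commutator $[w,z]$ transforms under one such move and then to chain these local statements together.

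First I would carry out the six elementary computations directly in $G$, writing $[w,z]=wzw^{-1}z^{-1}$. A short calculation shows that each transvection leaves the commutator literally unchanged, for instance $[wz,z]=(wz)z(wz)^{-1}z^{-1}=wzw^{-1}z^{-1}=[w,z]$, and similarly for the other three. Each inversion, on the other hand, sends $[w,z]$ to a conjugate of $[w,z]^{-1}$; for example $w[w^{-1},z]w^{-1}=zwz^{-1}w^{-1}=[z,w]=[w,z]^{-1}$, and the move $(w,z)\mapsto(w,z^{-1})$ is handled the same way after conjugating by $z$. Thus after a single elementary move the new commutator is always a conjugate of $[w,z]$ or of $[w,z]^{-1}$.

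To finish I would propagate this statement along the chain by induction, and this is the only point that requires genuine care: a chain of moves accumulates conjugating elements and may invert repeatedly, so one must first check that the relation ``is conjugate to $c$ or to $c^{-1}$'' is transitive in the appropriate sense. Concretely, if $a=g\,c^{\varepsilon}\,g^{-1}$ and $c=h\,d^{\delta}\,h^{-1}$ with $\varepsilon,\delta\in\{\pm1\}$, then $a=(gh)\,d^{\,\delta\varepsilon}\,(gh)^{-1}$, again a conjugate of $d^{\pm1}$. Applying this at each step yields that $[u',v']=[w_n,z_n]$ is a conjugate of $[w_0,z_0]^{\pm1}=[u,v]^{\pm1}$, equivalently that $[u,v]$ is conjugate to $[u',v']$ or to $[u',v']^{-1}$, as claimed. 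Equivalently, one may phrase the entire argument inside $F_2$: the computations above show that every $\tau\in\Aut F_2$ sends $[x,y]$ to a conjugate of $[x,y]^{\pm1}$, a classical observation of Nielsen, and the lemma then follows by pushing this equality forward through the epimorphism $F_2\ra G$ determined by $x\mapsto u$, $y\mapsto v$.
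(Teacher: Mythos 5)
Your proposal is correct and follows exactly the paper's approach: the paper's proof is precisely the ``straightforward calculation of commutators of the pairs obtained from $(u,v)$ by the elementary Nielsen moves,'' which you carry out explicitly (transvections fix $[w,z]$, inversions conjugate it to $[w,z]^{-1}$) and then chain together using Nielsen's generation of $\Aut F_2$ by elementary moves. Your added care with the transitivity of the relation ``conjugate to $c^{\pm1}$'' is a sound filling-in of what the paper leaves implicit.
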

The proof of this lemma is a straightforward calculation of commutators of the pairs obtained from $(u,v)$ by the elementary Nielsen moves.

In order to show that two elements are not conjugate in $G_3$, the Gupta-Sidki $3$-group, sometimes we use the finite quotients $G_3/St_{G_3}(n)$ by the $n$-th level stabilizers. Consider a natural epimorphism $$\pi\colon G_3 \rightarrow G_3/St_{G_3}(4).$$ The finite quotient $G_3/St_{G_3}(4)$ can be seen as a subgroup of $Sym(81)$ with $$\pi(x)=(1,28,55)(2,29,56)\dots(27,54,81)$$ and $$\pi(y)=(1,10,19)\dots (9,18,27)(28,46,37)\dots(36,54,45)(55,58,61)\cdot$$ $$(56,59,62)(57,60,63)(64,70,67)(65,71,68)(66,72,69)(73,74,75)(76,78,77).$$ 

Recall that two elements are conjugate in the symmetric group if and only if their cycle types are the same. Therefore if for two elements $g, h\in G_3$ their images $\pi(g)$ and $\pi(h)$ have different cycle types in $Sym(81)$ then, in particular, they are not conjugate in $G_3$. Below all computations in $Sym(81)$ were done using GAP.

\begin{ex}
The elements $yx^{-1}y^{-1}xy$ and $y$ are not conjugate in $G_3$. Indeed, \begin{center}
$\pi(yx^{-1}y^{-1}xy)=(1,22,10,3,24,12,2,23,11) (4,25,13,5,26,14,6,27,15)\cdot (7,19,16)(8,20,17)(9,21,
18)(55,64,79)(56,65,80)(57,66,81)\cdot  (58,67,74,60,69,73,59,68,75)(61,70,78,62,71,
76,63,72,77),$ \end{center} and its cycle type differs from the one of $\pi(y)$.
\label{ex}
\end{ex}



\bigskip
Let $G_3$ be the Gupta-Sidki $3$-group. Set $z_1=[x,y] \in [G_3,G_3]$ and for all $n>1$ set $z_n=(1, 1, z_{n-1})$. The fact that $z_n\in G_3$ follows from~\cite{MR767112}.

\begin{prop}
The elements $[x,yz_{k}]$, $[x,yz_{j}]^{\pm 1}$ and $z_1^{\pm 1}$ are not pairwise conjugate in $G_3$ for any $k,j>2$ such that $k\neq j$.
\label{conj}
\end{prop}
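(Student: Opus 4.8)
The plan is to reduce conjugacy in $G_3$ to a recursive comparison of first--level sections. Writing a general element as $t=x^{a}(t_1,t_2,t_3)$ with $a\in\{0,1,2\}$ and $t_i\in G_3$ (using that $G_3$ is self-similar, so all sections lie in $G_3$, and that the top-level permutation group of $G_3$ is the \emph{cyclic} group $\langle\pi\rangle\cong\Z/3\Z$), one checks that conjugating $g=(g_1,g_2,g_3)\in St_{G_3}(1)$ by $t$ produces the element of $St_{G_3}(1)$ whose sections are $(t_ig_it_i^{-1})$ cyclically permuted by $\pi^{a}$. I would first record the section decompositions; a direct computation gives $z_1=(x^{-1},y^{-1}x^{-1},x^{-1}y)$ and, with $w=yz_{k-1}$, $[x,yz_k]=(x^{-1},w^{-1}x^{-1},x^{-1}w)$, together with their inverses. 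Three quantities are then conjugacy invariants on $St_{G_3}(1)$: the multiset of top-level permutations of the three sections (invariant because $\langle\pi\rangle$ is abelian), the multiset of their images in $G_3^{ab}=(\Z/3\Z)^2$, and the multiset of their $G_3$-conjugacy classes.

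The multiset of section top-permutations immediately separates the ``positive'' elements $[x,yz_k],[x,yz_j],z_1$ (all three sections have top permutation $\pi^{-1}$) from the ``negative'' ones $[x,yz_j]^{-1},z_1^{-1}$ (top permutation $\pi$). Since $\pi\neq\pi^{-1}$ in $\Z/3\Z$, this single invariant disposes of every mixed pair at once, in particular $z_1$ versus $z_1^{-1}$ and $[x,yz_j]$ versus $[x,yz_j]^{-1}$.

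It remains to separate same-sign pairs, and since $g\sim h$ iff $g^{-1}\sim h^{-1}$ it suffices to treat the positive ones, $[x,yz_k]$ versus $[x,yz_j]$ and $[x,yz_m]$ versus $z_1$. Here I use that $z_m\in[G_3,G_3]$, whence the three section abelianizations of each positive element are the \emph{pairwise distinct} vectors $(2,0),(2,2),(2,1)$ (their $y$-components $0,2,1$ are distinct). Thus any conjugacy must fix the top permutation ($\pi^{a}=\mathrm{id}$) and match the sections position by position, forcing the third sections to be conjugate: $x^{-1}yz_{k-1}\sim x^{-1}yz_{j-1}$ (resp.\ $x^{-1}yz_{k-1}\sim x^{-1}y$). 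Cubing these elements, which have top permutation $\pi^{-1}$, lands back in $St_{G_3}(1)$; reading off the common conjugacy class of the resulting sections yields $yz_{k-2}\sim yz_{j-2}$ (resp.\ $yz_{k-2}\sim y$). At this ``$yz_a$'' level the three section permutations $\pi,\pi^{-1},\mathrm{id}$ are again pairwise distinct, so the same matching argument peels off one index at each step and reduces the question to a single relation $y\sim yz_\ell$ with $\ell\geq 1$, which in turn descends to $y\sim yz_1$.

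The base case is Example \ref{ex}, which shows $yz_1=yx^{-1}y^{-1}xy$ is not conjugate to $y$; this contradicts the relation just obtained. This is precisely where the hypothesis $k,j>2$ enters: it guarantees that the descent terminates at the genuine relation $y\sim yz_1$ rather than at the trivial $y\sim y$. The main obstacle is the rigidity established in the third paragraph: one must verify $z_m\in[G_3,G_3]$ so that the abelianization labels are available, and then show that at every stage of the descent the cyclic shift and the cross-matching of sections are genuinely excluded, so that the index is forced to strictly decrease. Once this rigidity is in place, the pairwise non-conjugacy of all five elements follows.
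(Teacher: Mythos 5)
Your overall strategy is sound and runs parallel to the paper's own proof: decompose everything at the first level, use the fact that a conjugator $x^a(t_1,t_2,t_3)$ cyclically shifts the sections and conjugates them by elements of $G_3$, force the shift to be trivial by exhibiting pairwise distinct section invariants, and then descend until you reach the base case supplied by Example \ref{ex}. Where you differ, your devices are actually cleaner than the paper's: the multiset of abelianization images of the sections makes the paper's Assumption (**) (which it verifies by a level-3 computation) unnecessary, and your cubing identity $(x^{-1}w)^3=(x^{-1}wx)(xwx^{-1})w$ replaces the paper's case-by-case analysis of the conjugator in the reduction from $x^{-1}yz_{k-1}\sim x^{-1}yz_{j-1}$ to $yz_{k-2}\sim yz_{j-2}$. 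Your section computations are correct in the right-action convention (the paper uses the opposite convention, whence its $z_1=(y^{-1}x,x,xy)$); both are internally consistent.

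However, the terminal step of your descent fails in the pair case $[x,yz_k]$ versus $[x,yz_j]$. Your peeling argument needs both elements under comparison to decompose as $(x,x^{-1},\cdot)$; this holds for $y$ and for $yz_a$ with $a\geq 2$, but it is false for $yz_1$: since $z_1=[x,y]$ is not of the form $(1,1,\cdot)$, in your convention $yz_1=(1,\,x^{-1}y^{-1}x^{-1},\,yx^{-1}y)$, not $(x,x^{-1},y)$. Consequently the chain $yz_{k-2}\sim yz_{j-2}\Rightarrow yz_{k-3}\sim yz_{j-3}\Rightarrow\cdots$ (say $k>j$) terminates at $yz_{k-j+1}\sim yz_1$ and never produces the relation $y\sim yz_\ell$; the claimed step ``which in turn descends to $y\sim yz_1$'' is exactly the move that is unavailable, so in this branch Example \ref{ex} is never reached. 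The gap closes easily with your own tools: the sections of $yz_1$ have top permutations $\mathrm{id},\pi,\pi^{-1}$, so the forced matching would make the identity section of $yz_1$ conjugate to $yz_{k-j}\neq 1$ (alternatively, compare abelianization labels), a contradiction. This missing terminal argument is precisely what the paper supplies explicitly when it observes that $yz_{j-k+1}=(x,x^{-1},yz_{j-k})$ cannot be conjugate to $yz_1=(xy^{-1}x,1,yxy)$ because $x$ is conjugate to none of $xy^{-1}x$, $yxy$, $1$.
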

\begin{proof}
We prove the following two claims in order to conclude the proposition:
\begin{claim}
$[x, yz_n]$ is not conjugate to $z_1^{\pm 1}$ for any $n>2$.
\end{claim}

\begin{claim} 
$[x,yz_{k}]$ and $[x,yz_{j}]^{\pm 1}$ are not conjugate for $k, j>2$ and $k\neq j$.
\end{claim}

The claims will be proved by contradiction. We compute that $z_1=[x,y]=(y^{-1}x, x, xy)$ and $[x, yz_n]=(z_{n-1}^{-1}y^{-1}x, x, xyz_{n-1})$.

\emph{Proof of Claim $1$.}
Assume that $[x, yz_n]$ and $z_1^{\pm 1}$ are conjugate, then there exists $g=x^i(g_1, g_2, g_3)\in G_3$ for some integer $i\in [0,2]$, such that $[x, yz_n]=g^{-1}z_1^{\pm 1}g=(g_1^{-1}, g_2^{-1}, g_3^{-1})x^{-i}(y^{-1}x, x, xy)^{\pm 1}x^i(g_1, g_2, g_3)$. 
Observe that $i=0$ because $x$ is not conjugate neither to $(y^{-1}x)^{\pm 1}$ nor to $(xy)^{\pm 1}$. 
Moreover $x$ is not conjugate to $x^{-1}$ in $G_3$ therefore $[x, yz_n]$ can be conjugate only to $z_1$. We will prove that it is not the case. \emph{For this it is enough to show that $xyz_{n-1}$ and $xy$ are not conjugate in $G_3$.} 
We will show it by induction assuming that \begin{equation*}(*) \text{ }
yz_n \text{ and } y \text{ are not conjugate in } G_3 \text{ for any } n\geq 1 
\end{equation*}
and then will show that (*) is indeed the case.
 
Suppose that $xyz_{n-1}$ and $xy$ are conjugate in $G$ then there exists $g=x^i (g_1, g_2, g_3)$ for some integer $i\in [0,2]$ such that $xyz_{n-1}=x(x, x^{-1}, yz_{n-2})=(g_1, g_2, g_3)^{-1}x^{-i}(xy)x^i(g_1, g_2, g_3)$.

\begin{itemize}
\item If $i=0$ then $(x, x^{-1}, yz_{n-2})=(g_3^{-1}, g_1^{-1}, g_2^{-1})(x,x^{-1}, y)(g_1, g_2, g_3)$ and it follows that $g_2yz_{n-2}g_2^{-1}=y$. 

\item If $i=1$ then $(x, x^{-1}, yz_{n-2})=(g_3^{-1}, g_1^{-1}, g_2^{-1})(y, x, x^{-1})(g_1, g_2, g_3)$ and it follows that $xg_2yz_{n-2}g_2^{-1}x^{-1}=y$. 

\item If $i=2$ then $(x, x^{-1}, yz_{n-2})=(g_3^{-1}, g_1^{-1}, g_2^{-1})(x^{-1}, y, x)(g_1, g_2, g_3)$ and it follows that $g_2 yz_{n-2}g_2^{-1}=y$. 
\end{itemize}

By assumption (*) elements $yz_{n-2}$ and $y$ are not conjugate in $G_3$ and we deduce that $xyz_{n-1}$ is not conjugate to $xy$ in $G_3$ modulo assumption (*).

\emph{Proof of the assumption (*)}: $yz_n$ and $y$ are not conjugate in $G_3$ for any $n\geq 1$.
\begin{enumerate}
\item The assumption holds for $n=1$. To see this,  look at the action of $yz_1$ and $y$ on the 4th level of the tree, see Example \ref{ex}. 
\item Suppose (*) is true for $n-1$.
\item Consider $yz_n=(x, x^{-1}, yz_{n-1})$ and suppose it is conjugate to $y=(x,x^{-1}, y)$. Then there exists $g=x^{i}(g_1, g_2, g_3)\in G_3$ with $0\leq i \leq 2$ such that $(g_1, g_2, g_3)^{-1}x^{-i}(x, x^{-1}, y)x^i(g_1, g_2, g_3)=(x, x^{-1}, yz_{n-1})$. Since $x$ is not conjugate neither to $x^{-1}$ nor to $y$ in $G_3$ then $i=0$. Therefore $(g_1^{-1}xg_1, g_2^{-1}x^{-1}g_2, g_3^{-1}yg_3)=(x, x^{-1}, yz_{n-1})$. We obtain the contradiction with the step of induction.
\end{enumerate}

\emph{Proof of Claim $2$.} We will prove Claim $2$ modulo Assumption (*) and (**) below and then in the end prove that both assumptions indeed hold.

\emph{Assumption (*)}: for any $k, j\geq1$ such that $k\neq j$ the elements $yz_k$ and $yz_j$ are not conjugate in $G_3$. 

\emph{Assumption (**)}: for any $n\geq 2$ the element $x$ is not conjugate to $xyz_n$ or $z_n^{-1}y^{-1}x$ in $G_3$. 

We prove Claim $2$ by contradiction. Suppose that there exists $g=x^i (g_1, g_2, g_3)\in G_3$ such that 
\begin{equation*}[x, yz_k]=g^{-1}[x, yz_j]^{\pm 1} g\end{equation*} 
or equivalently 
\begin{equation}
\label{1}
\begin{split}
 (z_{k-1}^{-1}y^{-1}x, x, xyz_{k-1})=\\ 
(g_1^{-1}, g_2^{-1}, g_3^{-1})x^{-i} (z_{j-1}^{-1}y^{-1}x, x, xyz_{j-1})^{\pm 1}&x^i(g_1, g_2, g_3).
\end{split}
\end{equation} 
Observe that $x$ is not conjugate to $x^{-1}$, $z_{j-1}^{-1}y^{-1}x^{-1}$ and $x^{-1}yz_{j-1}$. To see this, look at the quotient $G_3/St_{G_3}(1)\cong \Z/3\Z$ and notice that the images of $x$ and $x^{-1}$ are not conjugate in $\Z/3\Z$. Therefore $[x, yz_k]$ can not be conjugate to $[x, yz_j]^{-1}$. Moreover, it follows from Assumption (**) that $i=0$ in equation (\ref{1}).

\emph{To obtain the contradiction it is sufficient to show that $xyz_{k-1}$ is not conjugate to $xyz_{j-1}$.} Suppose they are conjugate, then there exists $g=x^i (g_1, g_2, g_3) \in G_3$ with $0\leq i\leq 2$ such that 
\begin{equation*}
x(x,x^{-1}yz_{k-2})=(g_1^{-1}, g_2^{-1}, g_3^{-1})x^{-i} x(x,x^{-1}, yz_{j-2})x^i(g_1, g_2, g_3).
\end{equation*}
\begin{itemize}
\item If $i=0$ then $(x, x^{-1}, yz_{k-2})=(g_3^{-1}xg_1, g_1^{-1}x^{-1}g_2, g_2^{-1}yz_{j-2}g_3)$ and it follows that $yz_{k-2}=g_2^{-1}yz_{j-2}g_2$.
\item If $i=1$ then $(x,x^{-1}, yz_{k-2})=(g_3^{-1}yz_{j-2}g_1, g_1^{-1}xg_2, g_2^{-1}x^{-1}g_3)$ and it follows that $yz_{k-2}=g_2^{-1}x^{-1}yz_{j-2}xg_2$.
\item If $i=2$ then $(x, x^{-1}, yz_{k-2})=(g_3^{-1}x^{-1}g_1, g_1^{-1}yz_{j-2}g_2, g_2^{-1}xg_3)$ and it follows that $yz_{k-2}=g_2^{-1}yz_{j-2}g_2$.
\end{itemize}
By Assumption (*), elements $yz_{k-2}$ and $yz_{j-2}$ are not conjugate in $G_3$ and we deduce that $xyz_{k-1}$ and $xyz_{j-1}$ are not conjugate in $G_3$ modulo assumptions (*) and (**).

\emph{Proof of the assumption (*)} 
Without loss of generality suppose that $j>k$. Suppose $yz_k=(x, x^{-1}yz_{k-1})$ and $yz_j=(x, x^{-1}, yz_{j-1})$ are conjugate. Then there exists $g=x^i(g_1, g_2, g_3)\in G_3$ with $0\leq i\leq 2$ such that \begin{equation*}
(x, x^{-1},yz_{k-1})=(g_1, g_2, g_3)^{-1}x^{-i}(x, x^{-1}, yz_{j-1})x^i(g_1, g_2, g_3).
\end{equation*}
Since $x$ is not conjugate to $x^{-1}$ or to $yz_{j-1}$ we conclude that $i=0$ and hence $yz_{k-1}$ and $yz_{j-1}$ are conjugate. Continuing in the same way, we deduce that the elements $yz_1=(xy^{-1}x, 1, yxy)$ and $yz_{j-k+1}=(x, x^{-1}, yz_{j-k})$ are conjugate. We obtain a contradiction since $x$ is not conjugate to $xy^{-1}x$ or to $yxy$ (to see this it is enough to look at the action of these elements on the 4th level of the tree) or to $1$.

\emph{Proof of the assumption (**)}
To see that $x$ is not conjugate to $xyz_2$ or $z_2^{-1}y^{-1}x$, it is enough to look at the action of these elements on the third level of the tree and to see that they have different cycle types, hence they are not conjugate in the quotient $G_3/St_{G_3}(3)$. 
And for $n\geq 3$, the action of $z_n$ on the third level is trivial therefore it is enough to look at the action of $x$, $xy$ and $y^{-1}x$ on the third level to see that they have different cycle types and therefore not conjugate in $G_3/St_{G_3}(3)$. \end{proof}

\bigskip
Let $G_p$ be the Gupta-Sidki $p$-group for $p\geq 5$ prime. Set $z_1=[x,y]$ and for $n>1$ set $z_n=(1,\dots,1, z_{n-1})$. The fact that $z_n\in G_p$ follows from \cite{MR767112}. 

\begin{prop}
\label{conj_p}
For any $k, j>2$ and $k\neq j$ the elements $[x, yz_k]$ and $[x, yz_j]^{\pm 1}$ are not conjugate in $G_p$.
\end{prop}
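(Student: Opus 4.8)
The plan is to mirror the recursive structure of the proof of Proposition \ref{conj}, while exploiting a feature available only for $p\geq 5$ that lets me replace the GAP-assisted cycle-type computations used for $p=3$ by a single, uniform invariant read off at the first level of the tree. The whole argument rests on one computation. Writing $w=yz_n=(x,x^{-1},1,\dots,1,yz_{n-1})$ and using that conjugation by the rooted element $x$ gives the sections $[x,w]|_s=(w|_{s-1})^{-1}\,w|_s$ (indices read cyclically, $w|_0=w|_p$), one obtains
\[
[x,yz_n]=\bigl(z_{n-1}^{-1}y^{-1}x,\; x^{-2},\; x,\; 1,\dots,1,\; yz_{n-1}\bigr),
\]
the entries sitting in coordinates $1,2,3,4,\dots,p-1,p$. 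The two crucial features are: (i) the coordinate-$2$ section equals $x^{-2}$ for \emph{every} $n$, and (ii) the only $n$-dependence is in coordinates $1$ and $p$, the coordinate-$p$ section being exactly $yz_{n-1}$. Feature (ii) is where $p\geq5$ genuinely differs from $p=3$: for $p=3$ the coordinate-$(p-1)$ section of $w$ is $x^{-1}$ rather than $1$, which is what produced the bulkier section $xyz_{n-1}$ in Proposition \ref{conj}.

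The engine is the conjugacy invariant given by the multiset of images of the level-$1$ sections under the homomorphism $G_p\ra G_p/St_{G_p}(1)\cong \mathbb{Z}/p\mathbb{Z}$: this map is conjugation-invariant, and conjugation only permutes and conjugates sections, so an element fixing the first level has a well-defined such multiset, while conjugation by a power of $x$ merely cyclically shifts the corresponding vector. For $[x,yz_n]$ this vector is $(1,-2,1,0,\dots,0)$, whose entry $-2$ is, for $p\geq5$, distinct from $0$ and $1$ and hence occupies a \emph{unique} coordinate. I would use this twice. To rule out conjugacy of $[x,yz_k]$ to $[x,yz_j]^{-1}$, I compare the multisets $\{1,1,-2,0,\dots\}$ and $\{-1,-1,2,0,\dots\}$, which differ because $2\notin\{0,1,-2\}\pmod p$ for $p\geq5$. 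For the remaining case $[x,yz_k]=g^{-1}[x,yz_j]\,g$ with $g=x^{i}(g_1,\dots,g_p)$, the uniqueness of the coordinate carrying $-2$ forces the cyclic shift to be trivial, i.e. $i=0$; comparing coordinate $p$ then yields $g_p^{-1}\,yz_{j-1}\,g_p=yz_{k-1}$, so that $yz_{k-1}$ and $yz_{j-1}$ are conjugate in $G_p$.

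It then remains to prove the analogue of Assumption (*): for $k\neq j$ the elements $yz_k$ and $yz_j$ are not conjugate. Assuming $j>k$ and writing $yz_m=(x,x^{-1},1,\dots,1,yz_{m-1})$, whose level-$1$ image vector is $(1,-1,0,\dots,0)$, the unique coordinate carrying the value $1$ again forces any conjugating element to have trivial rooted part, so conjugacy of $yz_k$ and $yz_j$ descends to that of $yz_{k-1}$ and $yz_{j-1}$; iterating reduces to the conjugacy of $yz_1$ with $yz_{j-k+1}$. Here the descent terminates, because $yz_1=y[x,y]$ is structurally different: a direct computation gives it the level-$1$ image vector $(2,-3,1,0,\dots,0)$, with three nonzero entries against the two of $(1,-1,0,\dots,0)$, so the two multisets have differing numbers of zeros and cannot coincide for $p\geq5$. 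Thus $yz_1$ is not conjugate to any $yz_m$ with $m\geq2$, contradicting the descent and establishing (*), whence the proposition.

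The step I expect to be most delicate is pinning down the commutator formula and, with it, confirming that the distinguished section is $x^{-2}$: the entire simplification over the $p=3$ case hinges on $-2\not\equiv 0,1\pmod p$, which is precisely what fails at $p=3$ (there $x^{-2}=x$) and forces the GAP-assisted level-$3$ and level-$4$ analysis. I would also be careful to check that the multiset of level-$1$ section images is genuinely a conjugacy invariant, as claimed, and that the descent in (*) really forces $i=0$ at every stage before reaching the base term $yz_1$.
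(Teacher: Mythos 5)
Your proof is correct, and its skeleton is the same as the paper's: you compute $[x,yz_n]=(z_{n-1}^{-1}y^{-1}x,\,x^{-2},\,x,\,1,\dots,1,\,yz_{n-1})$ (your $x^{-2}$ is the paper's $x^{p-2}$), use first-level images in $G_p/St_{G_p}(1)\cong\mathbb{Z}/p\mathbb{Z}$ to constrain conjugators, conclude from the $p$-th coordinate that $yz_{k-1}$ and $yz_{j-1}$ would have to be conjugate, and refute that by the same descent the paper uses for its assumption (*), terminating at $yz_1=(xy^{-1}x,\,x^{p-3},\,x,\,1,\dots,1,\,y^2)$, whose first-level data is incompatible with that of $yz_m$ for $m\geq 2$. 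The genuine difference is how $i=0$ is forced, and your way is cleaner. The paper anchors its case analysis at the section $x$ (image $1$); since image $1$ also occurs at coordinate $1$ (the section $z_{j-1}^{-1}y^{-1}x$), first-level images alone cannot exclude a nontrivial shift, so the paper must separately prove that $x$ is not conjugate to $z_{j-1}^{-1}y^{-1}x$, which it does by writing a cyclic system of section equations and telescoping it to a product that must be trivial yet has nontrivial abelianization. You instead anchor at the section $x^{-2}$: because $-2\not\equiv 0,1\pmod p$ precisely for $p\geq 5$, that value occupies a unique coordinate of the shift-equivariant vector $(1,-2,1,0,\dots,0)$, so the cyclic shift is trivial at once and the paper's auxiliary conjugacy argument becomes unnecessary. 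Likewise your multiset comparison $\{1,1,-2,0^{p-3}\}$ versus $\{-1,-1,2,0^{p-3}\}$ settles the exponent $-1$ case in one line, where the paper lists several non-conjugacy observations. In short: same invariant, same reduction and same base case, but your choice of which coordinate to exploit streamlines the one technical detour in the paper's argument, and your closing remark that this is exactly what fails at $p=3$ (where $-2\equiv 1$) correctly identifies why Proposition \ref{conj} needs the GAP-assisted level computations instead.
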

\begin{proof}
By contradiction, suppose that there exists an element $$g=x^{i}(g_1, \dots, g_p)~\in~G_p$$ with $0\leq i\leq p-1$ such that 
\begin{equation*}
[x, yz_k]=g^{-1}[x, yz_j]^{\pm 1} g
\end{equation*}
or, in other words,

\begin{equation}
\label{3}
\begin{split} &(z_{k-1}^{-1}y^{-1}x, x^{p-2}, x, 1,\dots, 1, yz_{k-1})=\\ 
(g_1^{-1},\dots, g_p^{-1})x^{-i}&(z_{j-1}^{-1}y^{-1}x, x^{p-2}, x, 1,\dots, 1, yz_{j-1})^{\pm 1} x^i(g_1, \dots, g_p)
\end{split}
\end{equation}
Suppose $i\neq 0$. Observe that $x$ is not conjugate to $1$, $x^{-1}$, $x^{-1}yz_{j-1}$, $x^{p-2}$, $x^{2}$, and to $(yz_{j-1})^{\pm 1}$. To see this, look at the quotient $G_p/St_{G_p}(1)\cong \Z/pZ$, and notice that the image of $x$ is not conjugate to the images of the elements above. 
Therefore $x$ must be conjugate to $z_{j-1}^{-1}y^{-1}x$, in other words there exists $h=x^m(h_1,\dots, h_p) \in G_p$ with $0\leq m\leq p-1$ such that 
\begin{equation*}
x=(h_1,\dots, h_p)^{-1}x^{-m}\cdot (a_1, \dots, a_p)x\cdot x^{m}(h_1,\dots, h_p),
\end{equation*}
where $a_1=x^{-1}$, $a_2=x$, $a_p=z_{j-2}^{-1}y^{-1}$ and $a_k=1$ otherwise.

It follows that the following system of equations holds:
\begin{center}
$\begin{cases}
h_p^{-1}a_{\pi^{m+1}(1)}h_1&=1\\
h_1^{-1}a_{\pi^{m+1}(2)}h_2&=1\\
\dots &\\
h_{p-1}^{-1}a_{\pi^{m+1}(p)}h_p&=1,
\end{cases}$
\end{center}
where $\pi^{m+1}$ is the $m$th power of the permutation $(1,2,\dots, p)$ and, for each $1\leq r\leq p$, $\pi^{m+1}(r)$ denotes the image of $r$ under $\pi^{m+1}$.

After solving the system one obtains that $$h_p^{-1} a_{\pi^{m+1}(1)}a_{\pi^{m+1}(2)}\dots a_{\pi^{m+1}(p)}h_p=1,$$ which gives us a contradiction to $i\neq 0$. 

In view of equation $(\ref{3})$ and that $i=0$, in order to obtain a contradiction to the initial assumption that $[x, yz_k]$ is conjugate to $[x, yz_j]^{\pm 1}$, it is enough to show that $yz_{k-1}$ is not conjugate to $yz_{j-1}$. Without loss of generality suppose that $k>j$. 

Suppose by contradiction that $yz_{k-1}$ is conjugate to $yz_{j-1}$, i.e. there exists $h=(h_1, \dots, h_p)x^l\in G_p$ with $0\leq l\leq p-1$ such that 

\begin{equation*}
\begin{split}
(x, x^{-1}, 1, \dots, 1, yz_{k-2})=\\(h_1,\dots, h_p)^{-1}x^{-l} (x, x^{-1}, 1, \dots, 1, yz_{j-2})& x^l (h_1,\dots, h_p).
\end{split}
\end{equation*}

Observe that $x$ is not conjugate to $1$, $x^{-1}$ and $yz_{j-2}$. 
Hence $l=0$ and therefore $yz_{k-2}$ is conjugate to $ yz_{j-2}$. 
We repeat the same arguments $j-2$ times to conclude that $yz_{k-j+1}=(x, x^{-1}, 1, \dots, 1, yz_{k-j})$ and $yz_1=(xy^{-1}x, x^{p-3},x, 1,\dots, 1, y^2)$ are conjugate. 
Observe that $x^{-1}$ is not conjugate to $1$, $xy^{-1}x$, $x^{p-3}$, $x$ and $y^2$. The contradiction then follows and we deduce that $yz_{k-1}$ is not conjugate to $yz_{j-1}$ which concludes the proof.

\end{proof}

We are now able to deduce that there are infinitely many Nielsen equivalence classes on generating pairs of the Gupta-Sidki $p$-group for any $p\geq 3$ prime.
\begin{proof}[Proof of Theorem \ref{main}]
Fix $p\geq 3$ prime. Let $z_1=[x,y] \in [G_p,G_p]$ and for all $n>1$ let $z_n=(1, \dots, 1, z_{n-1})\in G_p$. It follows from Theorem $4.1.1$  \cite{MR767112} that $z_n\in [G_p,G_p]$. Since $[G_p,G_p]=\Phi(G_p)$ and $\langle x,y\rangle =G_p$ then by Lemma \ref{frat} we deduce that $\langle x, yz_n\rangle =G_p$.
We conclude by Lemma \ref{criterion}, Proposition \ref{conj} and Proposition \ref{conj_p} that there are infinitely many orbits of the action $\Aut F_2\acts \Epi(F_2, G_p)$.
\end{proof}

\begin{proof}[Proof of Proposition \ref{cor}]
First, we show that the graph $N_2(G_3/St_{G_3}(4))$ is not connected. Consider two pairs $(u,v)=(x, y)$ and $(u', v')=(x^{-1}y^{-1}xy\cdot x, y)$ in $G_3$.  Since $\langle x,y\rangle=G_3$ and $[G_3,G_3]=\Phi(G_3)$, it follows that $(u', v')$ is also a generating pair of $G_3$ by Lemma \ref{frat}.

Denote the images of $(u,v)$ and $(u', v')$ in the finite quotient $G_3/St_{G_3}(4)$ by $(\overline{u},\overline{v})$ and $(\overline{u'},\overline{v'})$. Clearly the pairs $(\overline{u},\overline{v})$ and $(\overline{u'},\overline{v'})$ are generating. If they are Nielsen equivalent then  by Nielsen criterion (Lemma \ref{criterion}) their commutators $[\overline{u},\overline{v}]$ and $[\overline{u'},\overline{v'}]^{\pm 1}$ must be conjugate in $Sym(81)$ and, in particular, their cycle types must be the same. We will obtain the contradiction with the latter. 

We calculate the commutators respectively :

\footnotesize
$[\overline{u},\overline{v}]=(1,16,19,3,18,21,2,17,20)(4,10,22,5,11,23,6,12,24)(7,13,25)(8,14,26)$ $(9,15,
27)(28,37,46)(29,38,47)(30,39,48)(31,40,49)(32,41,50)(33,42,51)(34,43,52)$ $(35,
44,53)(36,45,54)(55,70,79)(56,71,80)(57,72,81)$ $(58,64,74,59,65,75,60,66,73)$ $(61,
67,78,63,69,77,62,68,76)$, 

$[\overline{u'},\overline{v'}]=(1,10,25,2,11,26,3,12,27)(4,15,21)(5,13,19)(6,14,20) (7,17,23,9,16,$\\$22,8,18,
24)$ $(28,41,53,29,42,54,30,40,52)(31,45,47,32,43,48,33,44,46)(34,37,50$\\$,35,38,51,
36,39,49)$ $(55,70,79)(56,71,80)(57,72,81)(58,64,74,59,65,75,60,66,73)$\\$(61,67,78,
63,69,77,62,68,76)$. 

\normalsize

The cycle types of $[\overline{u},\overline{v}]$ and $[\overline{u'},\overline{v'}]^{\pm 1}$ are different therefore $(\overline{u},\overline{v})$ and $(\overline{u'},\overline{v'})$ are not Nielsen equivalent. 

For any $l\geq 4$, there exists an epimorphism from $G_3/St_{G_3}(l)$ to $G_3/St_{G_3}(4)$. We will show
that 
the Nielsen graph $N_2(G_3/St_{G_3}(l))$ is not connected using Gasch\"utz lemma \cite{MR0083993}. Gasch\"utz lemma asserts that if there exists an epimorphism between finite groups $f\colon G\rightarrow H$ and $m\geq \operatorname{rank}(G)$ then for any generating $m$-tuple $(h_1,\dots, h_m)$ of $H$ there exists a generating $m$-tuple $(g_1,\dots, g_m)$ of $G$ with $f(g_i)=h_i$ for $i=1,\dots, m$. 
Hence the generating pairs $(\overline{u},\overline{v})$ and $(\overline{u'},\overline{v'})$ of $G_3/St_{G_3}(4)$ have preimages,  generating pairs in $G_3/St_{G_3}(l)$, which are not Nielsen equivalent. The proof is completed.



\end{proof}

\bibliography{Bibliography}
\bibliographystyle{alpha}

\end{document}